\newtheorem{thm}{Theorem}[section]
\newtheorem{defn}{Definition}[section]
\newtheorem{prop}{Proposition}[section]
\newtheorem{lem}{Lemma}[section]
\newtheorem{rem}{Remark}[section]
\newtheorem{cor}{Corollary}[section]
\newtheorem{exmpl}{Example}[section]
\journal{Journal of Algebra  }
\begin{document}
\begin{frontmatter}

\title{Equivariant one-parameter deformations of   Lie triple systems}

%
\author{RB Yadav\fnref{}\corref{mycorrespondingauthor}}
\cortext[mycorrespondingauthor]{Corresponding author}
\ead{rbyadav15@gmail.com}
\author{Namita Behera\corref{mycoauthor}}
\ead{nbehera@cus.ac.in}
\author{ Rinkila Bhutia\corref{mycoauthor}}
\address{Sikkim University, Gangtok, Sikkim, 737102, \textsc{India}}
\ead{rbhutia@cus.ac.in}

\begin{abstract}
In this article, we introduce equivariant formal deformation theory of  Lie triple systems. We  introduce  an equivariant deformation cohomology of Lie triple systems  and using this we study the  equivariant formal deformation theory of Lie triple systems.

\end{abstract}

\begin{keyword}
\texttt{Lie triple system, Group actions, Yamaguti cohomology, equivariant formal deformations, equivariant cohomology}
\MSC[2020] 17B99 \sep 16S80 \sep 13D10 \sep 13D03  \sep 14D15 \sep 16E40 \sep 55N91
\end{keyword}

\end{frontmatter}



\section{Introduction}\label{rbsec1}
 Lie triple systems were first noted by E. Cartan in his studies on totally geodesic submanifolds of Lie groups and on symmetric spaces.
Lie triple systems were studied from the algebraic point of view by Jacobson \cite{NJ,NJ1} and Lister \cite{WGL}. Simpler axioms were given by Yamaguti \cite{KY4}, who has also studied these and more general systems \cite{KY2,KY3}.

The deformation is a tool to study a mathematical object by deforming it into a family of the same kind of objects depending on a certain parameter. Algebraic deformation theory was introduced by Gerstenhaber for rings and algebras in a series of papers \cite{MG1},\cite{MG2},\cite{MG3}, \cite{MG4}, \cite{MG5}. Kubo and Taniguchi introduced deformation theory for Lie triple systems \cite{Kub-Tani}. They studied one-parameter formal deformations and established the connection between the cohomology groups and deformations: the suitable cohomology groups for the deformation theory of associative algebras and Lie triple systems are the Hochschild cohomology \cite{Hochs} and the Yamaguti cohomology \cite{KY1}, respectively. Equivariant deformation theory of associative algebras has been studied in \cite{gmrb}.

Purpose of this paper is to  introduce  equivariant deformation cohomology  and equivariant formal deformation theory of Lie triple systems.  Organization of the paper is as follows. In Section \ref{rbsec2}, we recall some definitions and results. In Section \ref{rbsec3}, we introduce equivariant deformation complex and equivariant deformation cohomology of a Lie triple system. In Section \ref{rbsec4}, we introduce equivariant deformation of a Lie triple system. In this section  we prove  that obstructions to equivariant deformations are equivariant cocycles. Also, in this section we give an example of an equivariant formal deformation of a Lie triple system. In Section \ref{rbsec5}, we study equivalence of two equivariant deformations and rigidity of an equivariant Lie triple system.

\section{Preliminaries}\label{rbsec2}
In this section, we recall definitions of Lie triple system, module over a Lie triple system,  Yamaguti cohomology  and formal one parameter deformation of Lie triple system.  Throughout the paper we denote a fixed field  by k and a finite group by G. For  k-modules $M_1,M_2,\cdots, M_n$, we denote  $x_1\otimes \cdots\otimes x_n\in M_1\otimes\cdots \otimes M_n$  by $(x_1,\cdots, x_n)$.  Also, we denote the ring of formal power series with coefficients in k by $k[[t]]$.
\begin{defn}
   A Lie triple system(Lts) is a vector space T over k with a k-linear map $\mu:T\otimes T\otimes T\to T$  satisfying   (if we write $\mu(a\otimes b\otimes c)=[abc]$ )

  \begin{equation}\label{LT1}
    [aab]=0,
  \end{equation}
 \begin{equation}\label{LT2}
    [abc]+[bca]+[cab]=0,
  \end{equation}
   \begin{equation}\label{LT3}
    [ab[cde]]=[[abc]de]+[c[abd]e]+[cd[abe]],
  \end{equation}
  for $a,b,c,d,e\in T.$
$[\;]$ is called the ternary operation of the Lie triple system T.
\end{defn}
\begin{defn}
We say that a vector space V is a  module over a Lts T  provided that $E_V := T \oplus V$ possesses the structure of a Lie triple system such that: (a) T is a Lie triple subsystem of $E_V$, (b) for $a, b, c \in E_V$, $[a, b, c] \in V$ if any one of a, b, c lies in V, and (c) [a, b, c] = 0 if any two of a, b, c lie in V. We also say that V is a T-module.\\
An  equivalent definition of T-module is given as follows:\\
    A module V over T is a k-vector space  with three actions $\mu_1$,  $\mu_2$ and $\mu_3$  (left, right and lr)  of $T\otimes T$ on V, that is, three linear maps $T\otimes T\otimes V\to V$, (for simplicity we denote  the three  actions by the same symbol [], one can differentiate  them from   the context, for $a,b\in T$, $c\in V$ we  write $\mu_1(a\otimes b\otimes c)=[abc]$, $\mu_2(a\otimes b\otimes c)=[cab]$, $\mu_3(a\otimes b\otimes c)=[acb]$) such that

    \begin{equation}\label{LTM1}
    [abc]=-[bac],
  \end{equation}
 \begin{equation}\label{LTM2}
    [abc]+[bca]+[cab]=0,
  \end{equation}
   \begin{equation}\label{LTM3}
    [ab[cde]]=[[abc]de]+[c[abd]e]+[cd[abe]],
  \end{equation}
  where atmost one of a,b,c,d,e is in V and remaining are in T.
\end{defn}
Clearly, a Lts T itself can be considered as a module over itself with respect to the actions $\mu_1$,  $\mu_2$ and $\mu_3$ given by  $\mu_1(a\otimes b\otimes c)=[abc]$, $\mu_2(a\otimes b\otimes c)=[cab]$, $\mu_3(a\otimes b\otimes c)=[acb]$). Here $[\;]$ denotes the ternary operation of T.
 \begin{rem} If V is a module over T, then there exists a linear map $\theta:T\otimes T\to End(V)$ satisfying the following conditions:
       \begin{equation}\label{LTR1}
      \theta(c,d)\theta(a,b)-\theta(b,d)\theta(a,c)-\theta(a,[bcd])+D(b, c)\theta(a, d)=0
    \end{equation}
    \begin{equation}\label{LTR2}
      \theta(c,d)D(a,b)-D(a, b)\theta(c, d)+\theta([abc],d)+\theta(c,[abd])=0
    \end{equation}
    where $D(a, b)=\theta(b,a)-\theta(a,b)$.  One can readily verify this by defining $\theta (a\otimes b)(v)=\mu_2(a\otimes b\otimes v)=[vab]$ , for all $a,b\in T$, $v\in V.$
 \end{rem}

\begin{exmpl}\label{lte2}
Space of all $n\times n$ matrices   is a Lie triple system with the ternary operation $[\;]$ defined by  $[ABC]=[[AB]_1C]_1$, for any $n\times n$ matrices A,B, C,  where $[UV]_1=UV-VU$, for any $n\times n$ matrices U, V.  We denote it by $M(n)$.\\
Space of all $n\times n$ skew symmetric matrices is a Lie triple system with the ternary operation $[\;]$ defined by  $[ABC]=[[AB]_1C]_1$, for any $n\times n$ skew symmetric matrices A, B, C,  where $[UV]_1=UV-VU$,  for any $n\times n$ matrices U,V.  We denote it by $M_{sk}(n)$.\\
 In general, every Lie algebra L with Lie product $[\;]_2$ is  a Lie triple system with ternary operation  $[\;]$ defined by $[abc]=[[ab]_2c]_2$, for any $a, b, c\in L.$
\end{exmpl}
\begin{exmpl}\label{lte3}
   Space of all $n\times n$ symmetric matrices is a Lie triple system with the ternary operation $[\;]$ defined by  $[ABC]=[[AB]_1C]_1$, for any $n\times n$ symmetric matrices A, B, C, where $[UV]_1=UV-VU$,  for any $n\times n$ matrices U,V.  We denote it by $M_{s}(n)$.\\
\end{exmpl}
\begin{exmpl}\label{rbe1}
\cite{NJ}  Let $T_n$ be the vector space of  $(n+1)\times (n+1)$  matrices with entries in a field $\mathbb{F}$ and spanned by  the set $\{g_i: g_i=e_{i,n+1}-e_{n+1,i}, i=1,2,\cdots, n\}$.  Define a linear map $[\;]:T_n\otimes T_n\otimes T_n\to T_n$ by $[g_ig_jg_l]=\delta_{li}g_j-\delta_{lj}g_i$. Here we denote $[\;](a\otimes b\otimes c)$ by $[abc]$, for all $a,b,c\in T_n$.  It can be readily verified that

  \begin{equation}\label{ltseqn1}
 [g_{i_1}g_{i_2}g_{i_3}] = -[g_{i_2}g_{i_1}g_{i_3}]
\end{equation}
 \begin{equation}\label{ltseqn2}
  [g_{i_1}g_{i_2}g_{i_3}]+[g_{i_2}g_{i_3}g_{i_1}]+[g_{i_3}g_{i_1}g_{i_2}] =0
\end{equation}
  \begin{equation}\label{ltseqn3}
 [g_{i_1}g_{i_2}[g_{i_3}g_{i_4}g_{i_5}] = [[g_{i_1}g_{i_2}g_{i_3}]g_{i_4}g_{i_5}]+[g_{i_3}[g_{i_1}g_{i_2}g_{i_4}]g_{i_5}]+[g_{i_3}g_{i_4}[g_{i_1}g_{i_2}g_{i_5}]],
\end{equation}
 for $1\le i_j\le n,$ with $1\le j\le 5$.  From  \ref{ltseqn1}-\ref{ltseqn3} and linearity of $[\;]$, we conclude that $T_n$ with the ternary operation $[\;]$ is a Lie triple system. $T_n$ is called the Meson triple system .
\end{exmpl}
\begin{exmpl}\label{lte4}
Let M(p,q) be the vector space of  all $p\times q$ matrices with entries in a field $\mathbb{F}$. M(p,q) is a Lie triple systems with a ternary operation $[\;]$ defined by $[ABC]=(AB^t-BA^t)C+C(B^tA-A^tB)$, for any $A, B, C\in M(p,q)$. Here $A^t$ denotes transpose of A, for any $A\in M(p,q)$.
\end{exmpl}
\begin{exmpl}\label{lte5}
  Let T be a Lie triple system and S be any nonempty set. Then the set $T^S$ of all functions $f$ from S to T is a Lie triple system with respect to a ternary operation given by  $[f_1f_2f_3](x)=[f_1(x)f_2(x)f_3(x)].$
\end{exmpl}
\begin{defn}[Yamaguti Cohomlogy]
  Let T be a Lie triple system and V be a module over T. From \cite{KY1}, we recall that for each $n\ge 0$  a k-vector space  $C^{2n+1}(T;V)$ is defined as follows:  For $n\ge 1$, $C^{2n+1}(T;V)$ consists of  those  $ f\in Hom_k(T^{\otimes (2n+1)},V),$  which  satisfy
   $$f(x_1,\cdots, x_{2n-2},x,x,y)=0$$
   and $$f(x_1,\cdots, x_{2n-2},x,y,z)+f(x_1,\cdots, x_{2n-2},y,z,x)+f(x_1,\cdots, x_{2n-2},z,x,y)=0,$$
   and $C^1(T;V)=Hom_k(T,V)$. A  k-linear map $\delta^{2n-1}:C^{2n-1}(T;V)\to C^{2n+1}(T;V)$ is defined by
  \begin{eqnarray*}
    &&\delta ^{2n-1}f(x_1,\cdots, x_{2n+1})\\
     &=& \theta(x_{2n},x_{2n+1})f(x_1,\cdots,x_{2n-1})-\theta(x_{2n-1},x_{2n+1})f(x_1,\cdots,x_{2n-2},x_{2n})\\
    &&+\sum_{k=1}^{n}(-1)^{k+n}D(x_{2k-1},x_{2k})f(x_1,\cdots,\widehat{ x_{2k-1}}\widehat{x_{2k}}, \cdots, x_{2n+1}) \\
     && +\sum_{k=1}^{n}\sum_{j=2k+1}^{2n+1}(-1)^{n+k+1}f(x_1,\cdots,\widehat{ x_{2k-1}}\widehat{x_{2k}}, \cdots,[x_{2k-1}x_{2k}x_j],\cdots, x_{2n+1}).
  \end{eqnarray*} This gives a cochain complex  $(C^{\ast}(T;V),\delta)$, cohomology of which is denoted by $H^{\ast}(T;V)$ and called as Yamaguti cohomology of T with coefficients in V. Since T is a module over itself. So we can consider Yamaguti cohomology $H^{\ast}(T;T)$.\\

\end{defn}

\section{Group actions and equivariant Yamaguti Cohomology}\label{rbsec3}
Let T be a Lie triple system  with its ternary operation $\mu (a\otimes b\otimes c) = [abc]$ and G be a finite group. The group G is said to act on T from the left if there exists a function
$$\phi:G\times  T \to T\;\;\;\; (g, a)\mapsto \phi(g, a) = ga$$
satisfying the following conditions.
\begin{enumerate}
  \item $ex = x$ for all $x \in T$, where $e \in G$ is the group identity.
  \item $g_1(g_2x) = (g_1g_2)x$ for all $g_1, g_2 \in G$ and $x \in T$.
  \item  For every $g \in G$, the left translation $\phi_g = \phi (g, ) : T \to T,$ $a \mapsto ga$ is a linear
map.
  \item For all $g \in G$ and $a, b, c\in T$,  $\mu(ga,gb,gc) = g\mu(a, b,c) = g[abc]$, that is, $\mu$ is equivariant with respect to the diagonal action on $T \otimes T\otimes T$.
\end{enumerate}
We denote an action as above    by $(G,T)$. We call Lie triple system (Lts) with an action of a group G as G-Lts.
\begin{prop}
  Let G be a finite group and T be a Lie triple system. Then G acts on T if and only if there exists a group homomorphism
  $$ \psi: G \to Iso_{Lts}(T,T),\;\; g\mapsto  \psi(g) = \phi_g$$
from the group G to the group of Lie triple system isomorphisms from T to T.
\end{prop}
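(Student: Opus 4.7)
The plan is to prove both implications by direct translation between the action and the homomorphism, checking that each of the four axioms of a group action corresponds to a structural requirement on $\psi$.

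For the forward implication, I would assume $\phi:G\times T\to T$ is a $G$-action and define $\psi(g):=\phi_g$. The first task is to verify that each $\phi_g$ lies in $\mathrm{Iso}_{Lts}(T,T)$. Linearity is immediate from condition (3), and preservation of the ternary operation is exactly condition (4). For bijectivity, I would observe that conditions (1) and (2) together force $\phi_{g^{-1}}\circ\phi_g=\phi_e=\mathrm{id}_T=\phi_g\circ\phi_{g^{-1}}$, so $\phi_g$ has a two-sided inverse in the category of Lie triple systems. Then to see that $\psi$ is a group homomorphism, I would evaluate $\psi(g_1g_2)(x)=(g_1g_2)x$ and use condition (2) to rewrite this as $g_1(g_2 x)=\phi_{g_1}(\phi_{g_2}(x))=(\psi(g_1)\circ\psi(g_2))(x)$, while condition (1) gives $\psi(e)=\mathrm{id}_T$.

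For the converse, given a homomorphism $\psi:G\to\mathrm{Iso}_{Lts}(T,T)$, I would define $\phi:G\times T\to T$ by $\phi(g,a):=\psi(g)(a)$ and verify the four axioms in turn. Axiom (1) follows from $\psi(e)=\mathrm{id}_T$ (a consequence of $\psi$ being a homomorphism into a group), axiom (2) from multiplicativity of $\psi$, axiom (3) from the fact that each $\psi(g)$ is in particular $k$-linear, and axiom (4) from the fact that each $\psi(g)$ preserves the ternary operation $\mu$.

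I do not expect any serious obstacle: this is a routine category-theoretic reformulation of an action as a homomorphism into the automorphism group, the only content specific to this setting being that ``automorphism'' here means \emph{Lie triple system isomorphism}, which is matched precisely by the linearity and ternary-preservation conditions (3) and (4). The one small point worth being careful about is confirming that $\phi_g$ is invertible as a Lie triple system map (not merely as a set map), which I would handle by noting that $\phi_{g^{-1}}$, being itself a linear ternary-preserving map, serves as the inverse in $\mathrm{Iso}_{Lts}(T,T)$.
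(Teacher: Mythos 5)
Your proposal is correct and follows essentially the same route as the paper's proof, which simply defines $\psi(g)=\phi_g$ in one direction and $\phi(g,a)=\psi(g)(a)$ in the other and leaves the verifications to the reader. Your write-up just makes explicit the routine checks (bijectivity of $\phi_g$ via $\phi_{g^{-1}}$, multiplicativity from axiom (2), etc.) that the paper declares "easily verified."
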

\begin{proof}
  For an action $(G,A)$, we define a map $\psi:G\to Iso_{Lts}(T,T)$ by $\psi(g)=\phi_g.$ One can verify easily that $\psi$ is a  group homomorphism.
  Now, let $\psi:G\to Iso_{Lts}(T,T)$ be a group homomorphism. Define a map $G\times T\to T$ by $(g,a)\mapsto \psi(g)(a).$ It can be easily seen that this is an action of G on the Lts  T.
\end{proof}
\begin{exmpl}
  Consider the Lie triple system of $n\times n$ skew symmetric matrices  $M_{sk}(n)$ as in Example \ref{lte2}. Consider the group $\mathbb{Z}_2=\{\bar{0},\bar{1}\}$.  $\mathbb{Z}_2$ acts on $M_{sk}(n)$ by $\bar{0} A=A$, $\bar{1} A=-A$, for any $A\in M_{sk}(n).$
\end{exmpl}
\begin{exmpl}\label{elts1}
 From Example \ref{rbe1},  $T_2$ is a Lie triple system with  the ternary operation $\mu :T_2\otimes T_2 \otimes T_2\to T_2$,  $(a,b,c)\mapsto \mu (a,b,c)=[a,b,c]$, defined by $\mu(g_i,g_j,g_l)=\delta_{li}g_j-\delta_{lj}g_i$. We have $T_2=span\{g_1,g_2\}$. Define   $\phi :\mathbb{Z}_2\times T_2 \to T_2$,    $(g, a)\mapsto \phi(g, a) = ga$,  by  $\bar{0}g_1=g_1$,  $\bar{0}g_2=g_2$, $\bar{1}g_1=g_2$, $\bar{1}g_2=g_1$. We have
  \begin{eqnarray}\label{ltse5}
    \bar{1}\mu(g_i,g_j,g_l)&=&\bar{1}\begin{cases}
                       g_j, & \mbox{if } i=l,\; j\ne l \\
                       -g_i, & \mbox{if } j=l,\; i\ne l  \\
                       0, & \mbox{otherwise}.
                        \end{cases}\\
 &=&\begin{cases}
                       g_l, & \mbox{if } i=l,\; j\ne l \\
                       -g_l, & \mbox{if } j=l,\; i\ne l  \\
                       0, & \mbox{otherwise}.
                     \end{cases}
                     \end{eqnarray}
  \begin{eqnarray}\label{ltse6}
     \mu(\bar{1}g_i,\bar{1}g_j,\bar{1}g_l) &=& \begin{cases}
                      \mu(g_j,g_l, g_j), & \mbox{if } i=l,\; j\ne l \\
                       \mu(g_l,g_i,g_i), & \mbox{if } j=l,\; i\ne l  \\
                       0, & \mbox{otherwise}.
                     \end{cases} \\
     &=& \begin{cases}
                       g_l, & \mbox{if } i=l,\; j\ne l \\
                       -g_l, & \mbox{if } j=l,\; i\ne l  \\
                       0, & \mbox{otherwise}.
                     \end{cases}
  \end{eqnarray}

From \ref{ltse5}, \ref{ltse6} and trilinearity of $\mu$ one can conclude that $\mathbb{Z}_2$ acts on the Lts $T_2$.
\end{exmpl}

\begin{exmpl}
 Consider the Lie triple system of $p\times p$  matrices $M(p,p)$ as in Example \ref{lte4}. One can readily verify that $\mathbb{Z}_2$ acts on $M(p,p)$ by $\bar{0}A=A$, $\bar{1}A=A^t$, for any $A\in M(p,q)$.
\end{exmpl}
\begin{defn}
  Let T be G-Lts. A G-module over T is a module V over T such that G acts on V, and  the three (left, right and lr)  actions  of $T\otimes T$ on  V $\mu_1$, $\mu_2$, $\mu_3$ are G-equivariant.
\end{defn}
Clearly, every G-Lts T is a G-module over itself.\\

Define, $\forall n\ge 0$, $$C_G^{2n+1}(T;V)=\{c\in C^{2n+1}(T;V): c(gx_1,\cdots, gx_{2n+1})=gc(x_1,\cdots, x_{2n+1}), \text{ for all}\; g\in G\}$$
An element in $C_G^{2n+1}(T;V)$ is called an invariant (2n+1)-cochain.  Clearly, $C_G^{2n+1}(T;V)$ is a vector subspace of $C^{2n+1}(T;V)$.

 We have following lemma.
\begin{lem}\label{rbnb1}
 c is  an invariant (2n-1)-cochain implies that $\delta^{2n-1}(c)$ is an invariant (2n+1)-cochain.
\end{lem}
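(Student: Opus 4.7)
The goal is to show $\delta^{2n-1}c(gx_1,\dots,gx_{2n+1}) = g\cdot \delta^{2n-1}c(x_1,\dots,x_{2n+1})$ for every $g \in G$. My strategy is simply to substitute $gx_i$ in place of $x_i$ on each of the four terms in the defining formula of $\delta^{2n-1}$, and then factor $g$ out from each piece using the appropriate equivariance property.

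Before doing that, I would first record the auxiliary equivariance properties I need. Since $V$ is a $G$-module over $T$ with all three actions $\mu_1,\mu_2,\mu_3$ being $G$-equivariant, the map $\theta:T\otimes T\to \mathrm{End}(V)$ defined by $\theta(a,b)(v)=[vab]=\mu_2(a\otimes b\otimes v)$ satisfies
\[
\theta(ga,gb)(gv) \;=\; [gv\,ga\,gb] \;=\; g[vab] \;=\; g\cdot\theta(a,b)(v).
\]
The same argument applied to $D(a,b)=\theta(b,a)-\theta(a,b)$ yields $D(ga,gb)(gv)=g\cdot D(a,b)(v)$. Equivariance of the ternary bracket on $T$ gives $[gx_{2k-1}\,gx_{2k}\,gx_j] = g[x_{2k-1}x_{2k}x_j]$, and the hypothesis that $c$ is invariant gives $c(gy_1,\dots,gy_{2n-1})=g\cdot c(y_1,\dots,y_{2n-1})$.

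With these in hand, I would process the four summands of $\delta^{2n-1}c(gx_1,\dots,gx_{2n+1})$ one by one. For the first summand, $\theta(gx_{2n},gx_{2n+1})c(gx_1,\dots,gx_{2n-1})$ becomes $\theta(gx_{2n},gx_{2n+1})\bigl(g\cdot c(x_1,\dots,x_{2n-1})\bigr)$ by invariance of $c$, and then $g\cdot\theta(x_{2n},x_{2n+1})c(x_1,\dots,x_{2n-1})$ by the equivariance of $\theta$; the second summand is identical in form. For the third summand, each term $D(gx_{2k-1},gx_{2k})\,c(gx_1,\dots,\widehat{gx_{2k-1}}\widehat{gx_{2k}},\dots,gx_{2n+1})$ reduces to $g\cdot D(x_{2k-1},x_{2k})\,c(x_1,\dots,\widehat{x_{2k-1}}\widehat{x_{2k}},\dots,x_{2n+1})$ by the same two-step procedure. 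For the fourth summand, the inner bracket $[gx_{2k-1}\,gx_{2k}\,gx_j]$ equals $g[x_{2k-1}x_{2k}x_j]$ by equivariance of the ternary operation on $T$; after this replacement each argument of $c$ is of the form $gy$, and invariance of $c$ pulls out a single $g$ on the outside.

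Summing the four pieces and factoring the common $g$, one recovers $g\cdot\delta^{2n-1}c(x_1,\dots,x_{2n+1})$, as required. I expect no real obstacle: the proof is a bookkeeping exercise verifying that each of the four ingredients $\theta$, $D$, $c$, and the ternary bracket respects the $G$-action, and the only thing one needs to be careful about is that the signs and the hat-omissions in the third and fourth sums are unaffected by the substitution. One should also check briefly that the two symmetry identities cutting out $C^{2n+1}(T;V)\subset\mathrm{Hom}_k(T^{\otimes(2n+1)},V)$ are preserved by $\delta^{2n-1}c$, but this is already part of $\delta^{2n-1}$ being a well-defined map $C^{2n-1}(T;V)\to C^{2n+1}(T;V)$ in the Yamaguti complex and does not interact with the group action.
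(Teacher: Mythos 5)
Your proof is correct and follows essentially the same route as the paper's: substitute $gx_i$ into the four summands of the coboundary formula and pull the $g$ out of each using invariance of $c$, equivariance of the bracket, and the induced equivariance of $\theta$ and $D$. In fact you make explicit a step the paper leaves implicit (that $\theta(ga,gb)(gv)=g\,\theta(a,b)(v)$ and likewise for $D$ follow from the $G$-equivariance of the module actions), so no further changes are needed.
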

\begin{proof}
  Let $c \in C_G^{2n-1}(T;V)$ and $g\in G.$ By definition, we have $$c(gx_1,\cdots,gx_{2n-1})=gc(x_1,\cdots,x_{2n-1}),$$ $\forall (x_1,\cdots,x_{2n-1})\in T^{\otimes(2n-1)}.$ Also,
  \begin{eqnarray}
     &&\delta^{2n-1}(c)(gx_1,\cdots,gx_{2n+1})\notag \\
    &=& \theta(gx_{2n},gx_{2n+1})c(gx_1,\cdots,gx_{2n-1})-\theta(gx_{2n-1},gx_{2n+1})c(gx_1,\cdots,gx_{2n-2},gx_{2n})\notag\\
    &&+\sum_{k=1}^{n}(-1)^{k+n}D(gx_{2k-1},gx_{2k})c(gx_1,\cdots,\widehat{g x_{2k-1}}\widehat{gx_{2k}}, \cdots, gx_{2n+1})\notag \\
     && +\sum_{k=1}^{n}\sum_{j=2k+1}^{2n+1}(-1)^{n+k+1}c(gx_1,\cdots,\widehat{ gx_{2k-1}}\widehat{gx_{2k}}, \cdots,[gx_{2k-1}gx_{2k}gx_j],\cdots, gx_{2n+1})\notag\\
     &=&g\delta^{2n-1}(c)(x_1,\cdots,x_{2n+1})
  \end{eqnarray}
 Hence, $c \in C_G^{2n-1}(T;V)$ implies that $\delta^{2n-1}c \in C_G^{2n+1}(T;V)$.
\end{proof}

From \ref{rbnb1}, we conclude that  $(C_G^{\ast}(T;V), \delta)$  is  a cochain complex.
\begin{defn}
  We call the cochain complex $(C_G^{\ast}(T;V), \delta)$ as equivariant Yamaguti cochain complex of G-Lts T with coefficients in G-module V. We call the cohomology  $H^*_G(T,V)=H^*(C_G^{\ast}(T;V))$ of this complex as  equivariant Yamaguti cohomology of T. For $V=T$, we denote the cohomology $H^*_G(T,T)$ by $H^*_G(T)$.
\end{defn}

\section{ Equivariant deformation of  a Lie triple system}\label{rbsec4}
\begin{defn}\label{rb2}

Let T be a Lie triple system with an action of G. We denote the space of all formal power series with coefficients in T by $T[[t]]$.  An equivariant formal one-parameter deformation of a G-Lts T is a $k[[t]]$-linear map  $$\mu_t : T[[t]]\otimes T[[t]]\otimes T[[t]]\to T[[t]]$$ satisfying the following properties:
\begin{itemize}
  \item[(a)]  $\mu_t(a,b,c)=\sum_{i=0}^{\infty}\mu_i(a,b,c) t^i$, for all $a,b,c\in T$, where $\mu_i:T\otimes T\otimes T\to T$ are k-linear and $\mu_0(a,b,c)=\mu(a,b,c)=[abc]$ is the original ternary operation on T.
  \item [(b)] For every $g\in G$, $$\mu_i(ga,gb,gc)=g\mu_i(a,b,c), \;\;\forall a,b,c\in T,$$ for every $i\ge 0.$ This is equivalent to saying that $\mu_i\in Hom_k^G(T\otimes T\otimes T,T),$ for all $i\ge 0$
      \item[(c)]

  \begin{equation}\label{DLT1}
    \mu_t(a,a,b)=0,
  \end{equation}
 \begin{equation}\label{DLT2}
    \mu_t(a,b,c)+\mu_t(b,c,a)+\mu_t(c,a,b)=0,
  \end{equation}
   \begin{equation}\label{DLT3}
    \mu_t(a,b,\mu_t(c,d,e))=\mu_t(\mu_t(a,b,c),d,e)+\mu_t(c,\mu_t(a,b,d),e)+\mu_t(c,d,\mu_t(a,b,e)),
  \end{equation}
  for all  $a,b,c,d,e\in T$
\end{itemize}

The equations  \ref{DLT1},\ref{DLT2} and \ref{DLT3}  are equivalent to following equations, respectively.
\begin{equation}\label{rbeqn1}
  \mu_r(a,a,b)=0, \;\text{for all}\; a,b\in T, \;r\ge 0.
  \end{equation}
  \begin{equation}\label{rbeqn2}
    \mu_r(a,b,c)+\mu_r(b,c,a)+\mu_r(c,a,b)=0, \;\text{for all}\; a,b,c\in T, \;r\ge 0.
   \end{equation}
   \begin{eqnarray}\label{rbeqn3}
      &&\sum_{i+j=r}\mu_i(a,b,\mu_j(c,d,e))\notag\\
      &=& \sum_{i+j=r}\{\mu_i(\mu_j(a,b,c),d,e)+\mu_i(c,\mu_j(a,b,d),e) \notag\\
       && +\mu_i(c,d,\mu_j(a,b,e))\}; \;\text{for all}\; a,b,c,d,e\in T,\;  r\ge 0
   \end{eqnarray}

\end{defn}
 Now we define equivariant formal deformations  of finite order.

 \begin{defn}\label{rb3}
Let T be a Lie triple system with an action of G.  An equivariant formal one-parameter deformation of order n  of a G-Lts T is a $k[[t]]$-linear map  $$\mu_t : T[[t]]\otimes T[[t]]\otimes T[[t]]\to T[[t]]$$ satisfying the following properties:
\begin{itemize}
  \item[(a)]  $\mu_t(a,b,c)=\sum_{i=0}^{n}\mu_i(a,b,c) t^i$, for all $a,b,c\in T$, where $\mu_i:T\otimes T\otimes T\to T$ are k-linear and $\mu_0(a,b,c)=\mu(a,b,c)=[abc]$ is the original ternary operation on T.
  \item [(b)] For every $g\in G$, $$\mu_i(ga,gb,gc)=g\mu_i(a,b,c), \;\;\forall a,b,c\in T,$$ for every $i\ge 0.$ This is equivalent to saying that $\mu_i\in Hom_k^G(T\otimes T\otimes T,T),$ for all $i\ge 0$
      \item[(c)]

  \begin{equation}\label{FDLT1}
    \mu_t(a,a,b)=0,
  \end{equation}
 \begin{equation}\label{FDLT2}
    \mu_t(a,b,c)+\mu_t(b,c,a)+\mu_t(c,a,b)=0,
  \end{equation}
   \begin{equation}\label{FDLT3}
    \mu_t(a,b,\mu_t(c,d,e))=\mu_t(\mu_t(a,b,c),d,e)+\mu_t(c,\mu_t(a,b,d),e)+\mu_t(c,d,\mu_t(a,b,e)),
  \end{equation}
  for all  $a,b,c,d,e\in T$
\end{itemize}
\end{defn}
\begin{rem}\label{rbrem1}
  \begin{itemize}
    \item For $r=0$, conditions \ref{rbeqn1}-\ref{rbeqn3} are equivalent to the fact that T is a Lie triple system.
    \item For $r=1$, conditions \ref{rbeqn1}, \ref{rbeqn2} and \ref{rbeqn3} are equivalent to $\;\;\mu_1(a,a,b)=0,\;$ $ \mu_1(a,b,c)+\mu_1(b,c,a)+\mu_1(c,a,b)=0$ and \begin{eqnarray*}\label{rrbeqn1}
      0&=&-\mu_1(a,b,[cde])-[ab\mu_1(c,d,e)]\\
      &&+ \mu_1([abc],d,e)+[\mu_1(a,b,c)de] +\mu_1(c,[abd],e)+[c\mu_1(a,b,d)e] \\
       && +\mu_1(c,d,[abe])+[cd\mu_1(a,b,e)]\\
       &=&\delta^3\mu_1(a,b,c,d,e); \;\text{for all}\; a,b,c,d,e\in T
   \end{eqnarray*}
          Thus for $r=1$,  \ref{rbeqn1}-\ref{rbeqn3} are equivalent to saying that $\mu_1\in C^3_G(T,T)$  and is a 3-cocycle. In general, for $r\ge 0$, $\mu_r$ is just a 3-cochain in $C^3_G(T,T).$
  \end{itemize}
\end{rem}
\begin{exmpl}
  Consider $\mathbb{Z}_2$-Lts $T_2$  as in Example \ref{elts1}. Define a k-linear map $\mu_2:T_2\otimes T_2\otimes T_2\to T_2$ by $\mu_2(g_i,g_j,g_p)=\delta_{jp}g_j-\delta_{ip}g_i$. Define $\mu_t=\mu+\mu_2t^2$.  One can readily verify that

  \begin{equation}\label{dltseqn1}
 \mu_2(g_i,g_j,g_p) = -\mu_2(g_j,g_i,g_p)
\end{equation}
 \begin{equation}\label{dltseqn2}
  \mu_2(g_i,g_j,g_p)+\mu_2(g_j,g_p,g_i)+\mu_2(g_p,g_i,g_j)=0
\end{equation}
   \begin{eqnarray}\label{dltse5}
    \bar{1}\mu_2(g_i,g_j,g_p)&=&\bar{1}\begin{cases}
                       g_i, & \mbox{if } i=p,\; j\ne p\\
                       g_j, & \mbox{if } j=p,\; i\ne p \\
                       0, & \mbox{otherwise}.
                        \end{cases}\\
 &=&\begin{cases}
                       -g_j, & \mbox{if } i=p,\; j\ne p\\
                       g_i, & \mbox{if } j=p,\; i\ne p \\
                       0, & \mbox{otherwise}.
                     \end{cases}
                     \end{eqnarray}
  \begin{eqnarray}\label{dltse6}
     \mu_2(\bar{1}g_i,\bar{1}g_j,\bar{1}g_p) &=& \begin{cases}
                      \mu_2(g_j,g_p, g_j), & \mbox{if } i=p,\; j\ne p\\
                       \mu_2(g_p,g_i,g_i), & \mbox{if } j=p,\; i\ne p \\
                       0, & \mbox{otherwise}.
                     \end{cases} \\
     &=& \begin{cases}
                       -g_j, & \mbox{if } i=p,\; j\ne p\\
                       g_i, & \mbox{if } j=p,\; i\ne p \\
                       0, & \mbox{otherwise}.
                     \end{cases}
  \end{eqnarray}
  \begin{eqnarray}\label{dltseqn3}
  &&\mu_2(g_i,g_j,\mu_2(g_p,g_l,g_m))\notag\\
  & =& \mu_2(\mu_2(g_i,g_j,g_p),g_l,g_m)+\mu_2(g_p,\mu_2(g_i,g_j,g_l),g_m)+\mu_2(g_p,g_l,\mu_2(g_i,g_j,g_m))\notag\\
\end{eqnarray}
\begin{eqnarray}\label{dltseqn3}
 &&\mu_2(g_i,g_j,\mu(g_p,g_l,g_m)) +\mu(g_i,g_j,\mu_2(g_p,g_l,g_m))\notag\\
 &=& \mu(\mu_2(g_i,g_j,g_p),g_l,g_m)+ \mu_2(\mu(g_i,g_j,g_p),g_l,g_m)+\mu(g_p,\mu_2(g_i,g_j,g_l),g_m)\notag\\
 &&+\mu_2(g_p,\mu(g_i,g_j,g_l),g_m)+\mu(g_p,g_l,\mu_2(g_i,g_j,g_m))+\mu_2(g_p,g_l,\mu(g_i,g_j,g_m))\notag\\
\end{eqnarray}
From Equations \ref{dltseqn1}-\ref{dltseqn3}, one can conclude that $\mu_t$ is an equivariant deformation of $T_2$ of order 2.
\end{exmpl}
\begin{defn}
  The 3-cochain  $\mu_1$ in $C^3_G(T,T)$ is called infinitesimal of the equivariant  deformation $\mu_t$. In general, if $\mu_i=0,$ for $1\le i\le n-1$, and $\mu_n$ is a nonzero cochain in  $C^3_G(T,T)$, then $\mu_n$ is called n-infinitesimal of the equivariant deformation $\mu_t$.
\end{defn}
\begin{prop}
  The infinitesimal   $\mu_1$ of the equivariant deformation  $\mu_t$ is a 3-cocycle in $C^3_G(T,T).$ In general, n-infinitesimal  $\mu_n$ is a 3-cocycle in $C^3_G(T,T).$
\end{prop}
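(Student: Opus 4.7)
The plan is to extract the coefficient of $t^n$ from each of the three deformation equations (\ref{rbeqn1})--(\ref{rbeqn3}) under the vanishing hypothesis $\mu_i = 0$ for $1 \le i \le n-1$. This parallels, for general $n$, the analysis already carried out for $n=1$ in Remark \ref{rbrem1}, so the proof of the infinitesimal case is essentially subsumed by that remark.

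First I would verify that $\mu_n$ is a 3-cochain in $C^3_G(T,T)$. The identity $\mu_n(a,a,b)=0$ is precisely the coefficient of $t^n$ in (\ref{rbeqn1}), and the cyclic identity $\mu_n(a,b,c) + \mu_n(b,c,a) + \mu_n(c,a,b) = 0$ is the coefficient of $t^n$ in (\ref{rbeqn2}). Combined with the G-equivariance of each $\mu_i$ guaranteed by condition (b) of Definition \ref{rb2}, these two identities place $\mu_n$ inside $C^3_G(T,T)$.

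Next, for the cocycle condition, I would expand (\ref{rbeqn3}) at $r = n$:
\begin{align*}
\sum_{i+j=n} \mu_i(a,b,\mu_j(c,d,e)) &= \sum_{i+j=n} \bigl[\mu_i(\mu_j(a,b,c),d,e) \\
&\quad + \mu_i(c,\mu_j(a,b,d),e) + \mu_i(c,d,\mu_j(a,b,e))\bigr].
\end{align*}
Under the vanishing hypothesis, the only surviving pairs in each sum are $(i,j) = (0,n)$ and $(n,0)$. Substituting $\mu_0(x,y,z) = [xyz]$ and moving every term to one side, the resulting identity is exactly $\delta^3 \mu_n(a,b,c,d,e) = 0$, in the notation of the coboundary formula unpacked in Remark \ref{rbrem1}.

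The main obstacle is not conceptual but bookkeeping: one must confirm that the eight surviving terms from the $(0,n)$ and $(n,0)$ contributions line up, with the correct signs, against the eight summands defining $\delta^3 \mu_n$. Once that matching is made explicit, the cocycle identity drops out directly, and the case $n=1$ reduces literally to the calculation recorded in Remark \ref{rbrem1}.
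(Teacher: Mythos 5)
Your proposal is correct and follows the same route as the paper, which simply states that the case $n=1$ is immediate from Remark \ref{rbrem1} and that the general case is similar; your extraction of the $(0,n)$ and $(n,0)$ terms from equation \ref{rbeqn3} under the vanishing hypothesis is exactly the computation the paper leaves implicit. The eight-term matching you describe is precisely the identity $\delta^3\mu_n(a,b,c,d,e)=0$ displayed in that remark with $\mu_1$ replaced by $\mu_n$.
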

\begin{proof}
  For n=1, proof is obvious from the Remark \ref{rbrem1}. For $n>1$, proof is similar.
\end{proof}
We can write Equations \ref{rbeqn1}, \ref{rbeqn2} and \ref{rbeqn3} for $r=n+1$ using the definition of coboundary $\delta$ as
\begin{equation}\label{rbeqn4}
   \mu_{n+1}(a,a,b)=0,
  \end{equation}
  \begin{equation}\label{rbeqn5}
    \mu_{n+1}(a,b,c)+\mu_{n+1}(b,c,a)+\mu_{n+1}(c,a,b)=0,
   \end{equation}
   \begin{eqnarray}\label{rbeqn6}
      &&\delta \mu_{n+1}(a,b,c,d,e)\notag \\
      &=& \sum_{\substack{ i+j=n+1\\i,j>0}}\mu_i(a,b,\mu_j(c,d,e))-\sum_{\substack{i+j=n+1\\i,j>0}}\{\mu_i(\mu_j(a,b,c),d,e)+ \mu_i(c,\mu_j(a,b,d),e) \notag\\
       && +\mu_i(c,d,\mu_j(a,b,e))\}\notag\\
   \end{eqnarray}
 for all  $a,b,c,d,e\in T$.

Define a 5-cochain $F_{n+1}$ by
\begin{align*}
&F_{n+1}(a,b,c,d,e)\notag\\
&= \sum_{\substack{i+j=n+1\\i,j>0}}\mu_i(a,b,\mu_j(c,d,e))-\sum_{\substack{i+j=n+1\\i,j>0}}\{\mu_i(\mu_j(a,b,c),d,e)+ \mu_i(c,\mu_j(a,b,d),e)\notag \\
       & +\mu_i(c,d,\mu_j(a,b,e))\}\\
\end{align*}

\begin{lem}\label{Obsl1}
  The 5-cochain $F_{n+1}$ is invariant, that is $F_{n+1}\in C_G^5(T,T).$
\end{lem}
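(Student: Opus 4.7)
The plan is to verify the invariance of $F_{n+1}$ directly from the definition, using only the fact that each $\mu_i$ appearing in an equivariant formal deformation is itself $G$-equivariant (condition (b) of Definition \ref{rb3}). Since $F_{n+1}$ is built from the $\mu_i$'s via iterated composition, and composition of $G$-equivariant multilinear maps is again $G$-equivariant, the conclusion should fall out by routine bookkeeping.

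More concretely, I would fix an arbitrary $g \in G$ and $a,b,c,d,e \in T$, and evaluate $F_{n+1}(ga,gb,gc,gd,ge)$ summand by summand. A typical term of the first kind, $\mu_i(ga,gb,\mu_j(gc,gd,ge))$, is transformed by first applying $G$-equivariance of $\mu_j$ to pull $g$ out of the inner slot, giving $\mu_i(ga,gb,g\mu_j(c,d,e))$, and then applying $G$-equivariance of $\mu_i$ to pull $g$ out of the outer slot, yielding $g\mu_i(a,b,\mu_j(c,d,e))$. The same two-step unwinding is applied, mutatis mutandis, to each of the three types of terms $\mu_i(\mu_j(ga,gb,gc),gd,ge)$, $\mu_i(gc,\mu_j(ga,gb,gd),ge)$, and $\mu_i(gc,gd,\mu_j(ga,gb,ge))$ that appear in the second sum.

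After these manipulations, every summand has the common factor $g$ on the left. Using linearity of the left translation $\phi_g: T \to T$, I would pull this factor out of the sum to obtain
\[
F_{n+1}(ga,gb,gc,gd,ge) \;=\; g\,F_{n+1}(a,b,c,d,e),
\]
which is exactly the condition that places $F_{n+1}$ in $C_G^5(T,T)$.

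There is no genuine obstacle here; the argument is essentially a direct verification. The only point requiring care is that each equivariance step must be applied in the correct order (inner $\mu_j$ first, then outer $\mu_i$), and that the index ranges $i+j=n+1$ with $i,j>0$ are preserved throughout, so that no extra cross terms appear. Because the linear operation of $g$ commutes with finite sums and with the existing $k$-linear structure on $T$, collecting terms at the end is immediate.
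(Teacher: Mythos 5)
Your argument is correct and coincides with the paper's own proof: both fix $g \in G$, use condition (b) of the deformation (equivariance of each $\mu_i$) to pull $g$ out of the inner $\mu_j$ and then the outer $\mu_i$ in every summand, and finally extract $g$ from the finite sum by linearity of the translation $\phi_g$. No gap; this is the same direct verification the paper carries out.
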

\begin{proof}
 To prove that $F_{n+1}$ is invariant we show that  $$F_{n+1}(ga,gb,gc,gd,ge)=gF_{n+1}(a,b,c,d,e)$$  for all $a,b,c,d,e\in T$ . From Definition \ref{rb2}, we have $$\mu_i(ga,gb,gc)=g\mu_i(a,b,c),$$ for all $a,b,c\in T.$
 So, we have, for all $a,b,c,d,e\in T$,
 {\scriptsize \begin{align*}
   &F_{n+1}(ga,gb,gc,gd,ge)\\
   &= \sum_{\substack{i+j=n+1\\i,j>0}}\mu_i(ga,gb,\mu_j(gc,gd,ge))-\sum_{\substack{i+j=n+1\\i,j>0}}\{\mu_i(\mu_j(ga,gb,gc),gd,ge)+ \mu_i(gc,\mu_j(ga,gb,gd),ge)\\
 & +\mu_i(gc,gd,\mu_j(ga,gb,ge))\} \\
   &=  \sum_{\substack{i+j=n+1\\i,j>0}}\mu_i(ga,gb,g\mu_j(c,d,e))-\sum_{\substack{i+j=n+1\\i,j>0}}\{\mu_i(g\mu_j(a,b,c),gd,ge)+ \mu_i(gc,g\mu_j(a,b,d),ge)\\
& +\mu_i(gc,gd,g\mu_j(a,b,e))\} \\
    &=  g\sum_{\substack{i+j=n+1\\i,j>0}}\mu_i(a,b,\mu_j(c,d,e))-g\sum_{\substack{i+j=n+1\\i,j>0}}\{\mu_i(\mu_j(a,b,c),d,e)+ \mu_i(c,\mu_j(a,b,d),e) \\
 & +g\mu_i(c,d,\mu_j(a,b,e))\} \\
   &=gF_{n+1}(a,b,c,d,e).
 \end{align*}}
 So we conclude that $F_{n+1}\in C_G^5(T,T).$
\end{proof}
\begin{defn}
  The 5-cochain $F_{n+1}\in C_G^5(T,T)$ is called $(n+1)th$ obstruction cochain for extending a given equivariant deformation of order n to an equivariant deformation of T of order $(n+1)$. Now onwards we denote $F_{n+1}$ by $Ob_{n+1}(T)$
\end{defn}
 By using Lemma \ref{Obsl1} and \cite{Kub-Tani}, we have the following result.
\begin{thm}
  The $(n+1)$th obstruction cochain $Ob_{n+1}(T)$ is a 5-cocycle.
\end{thm}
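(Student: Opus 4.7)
The plan is to reduce the equivariant statement to the non-equivariant cocycle identity already established by Kubo and Taniguchi for formal deformations of Lie triple systems. By Lemma \ref{rbnb1}, the equivariant Yamaguti complex $(C_G^{\ast}(T,T),\delta)$ sits inside the full Yamaguti complex $(C^{\ast}(T,T),\delta)$ as a subcomplex, with the coboundary on the former being the restriction of the coboundary on the latter. Consequently, to show that $Ob_{n+1}(T)=F_{n+1}$ is a $5$-cocycle in $C_G^{\ast}(T,T)$, it is enough to verify two things: that $F_{n+1}$ is $G$-invariant, and that $\delta^{5}F_{n+1}=0$ as an element of $C^{7}(T,T)$.

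The first point is exactly the content of Lemma \ref{Obsl1}, so that piece is already done. For the second point, I would appeal to the corresponding theorem from \cite{Kub-Tani}: in the non-equivariant setting, the obstruction cochain attached to any order-$n$ deformation of a Lie triple system is a $5$-cocycle for the Yamaguti coboundary. The crucial observation is that both the definition of $F_{n+1}$ and the formula for $\delta^{5}$ involve only the $k$-linear maps $\mu_i$ and the original ternary bracket of $T$, with no reference whatsoever to the $G$-action. Hence the identity $\delta^{5}F_{n+1}=0$ transfers from the non-equivariant computation to ours without modification.

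If one wanted to reprove the non-equivariant identity directly rather than cite it, the route would be to expand $(\delta^{5}F_{n+1})(x_1,\dots,x_7)$ using the formula for $\delta^{2n-1}$ at $n=3$, substitute in the definition of $F_{n+1}$, and regroup. The engine of the cancellation is the fact that, because $\mu_t$ is a deformation of order $n$, the ternary Jacobi-type sum
\[
\sum_{\substack{i+j=m\\ i,j\ge 0}}\Bigl(\mu_i(a,b,\mu_j(c,d,e))-\mu_i(\mu_j(a,b,c),d,e)-\mu_i(c,\mu_j(a,b,d),e)-\mu_i(c,d,\mu_j(a,b,e))\Bigr)
\]
vanishes for every $0\le m\le n$. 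Using these partial identities systematically, all terms of $\delta^{5}F_{n+1}$ pair off.

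The main obstacle is purely combinatorial bookkeeping: matching the seven-variable coboundary expansion with the bilinear expressions in the $\mu_i$ coming from $F_{n+1}$, and confirming that every term cancels. Since this is precisely the Gerstenhaber-style computation adapted by Kubo-Taniguchi to the ternary Yamaguti setting, the cleanest and most honest course of action is to state the invariance (Lemma \ref{Obsl1}) and then quote \cite{Kub-Tani} for the cocycle property, exactly as the authors do.
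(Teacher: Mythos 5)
Your proposal matches the paper's argument exactly: the paper proves this theorem by combining Lemma \ref{Obsl1} (invariance of $F_{n+1}$) with the non-equivariant cocycle identity from \cite{Kub-Tani}, which is precisely your reduction. The additional sketch of how the Kubo--Taniguchi cancellation works is a bonus, but the core route is the same.
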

\begin{thm}
Let $\mu_t$ be an   equivariant deformation of T of order n. Then $\mu_t$ extends to an equivariant deformation of order $n+1$ if and only if cohomology class of $(n+1)$th obstruction $Ob_{n+1}(T)$  vanishes.
\end{thm}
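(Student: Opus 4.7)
The plan is to view this theorem as the cohomological reformulation of the order-$(n+1)$ compatibility equations \ref{rbeqn4}--\ref{rbeqn6}. The crucial point is that \ref{rbeqn6} is literally the identity $\delta\mu_{n+1}=Ob_{n+1}(T)$, so producing an admissible $\mu_{n+1}$ amounts to exhibiting $Ob_{n+1}(T)$ as a coboundary in the equivariant Yamaguti complex $C^\ast_G(T,T)$. Combined with the fact that $Ob_{n+1}(T)$ is already known to be a $5$-cocycle, this translates the extension problem into the single cohomological condition $[Ob_{n+1}(T)]=0$ in $H^5_G(T)$.

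For the forward direction, suppose $\mu_t$ extends to an equivariant deformation of order $n+1$ with new top-degree term $\mu_{n+1}$. By part (b) of Definition \ref{rb3}, $\mu_{n+1}$ is $G$-equivariant; the $r=n+1$ instances of \ref{rbeqn1}--\ref{rbeqn2} force $\mu_{n+1}(a,a,b)=0$ together with the cyclic identity, so $\mu_{n+1}\in C^3_G(T,T)$. Rewriting \ref{rbeqn3} at $r=n+1$ via the coboundary $\delta$ yields exactly $\delta\mu_{n+1}=Ob_{n+1}(T)$, and Lemma \ref{Obsl1} places both sides inside $C^5_G(T,T)$. Hence the class $[Ob_{n+1}(T)]$ vanishes in $H^5_G(T)$.

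Conversely, assume $[Ob_{n+1}(T)]=0$ in $H^5_G(T)$. Since the preceding theorem guarantees that $Ob_{n+1}(T)$ is a $5$-cocycle in the equivariant complex, the vanishing of its class furnishes some $\mu_{n+1}\in C^3_G(T,T)$ with $\delta\mu_{n+1}=Ob_{n+1}(T)$. Form the extended law $\widetilde{\mu}_t=\mu_t+\mu_{n+1}t^{n+1}$. Equivariance of every coefficient (including the new one) is immediate from $\mu_{n+1}\in C^3_G(T,T)$; membership in $C^3_G(T,T)$ also supplies \ref{rbeqn4}--\ref{rbeqn5} at order $n+1$; and the coboundary identity is precisely \ref{rbeqn6} at $r=n+1$. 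Therefore $\widetilde{\mu}_t$ satisfies Definition \ref{rb3} through order $n+1$.

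The only genuinely delicate point is that the primitive of $Ob_{n+1}(T)$ must be chosen \emph{inside} the $G$-invariant subcomplex, not merely in $C^3(T,T)$. This is precisely what the use of $H^5_G(T)$ in the statement encodes, and it is why Lemma \ref{Obsl1}, which situates the obstruction inside the invariant complex to begin with, is indispensable for equivariant obstruction theory to run in parallel with the classical Kubo--Taniguchi theory.
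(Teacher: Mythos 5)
Your proposal is correct and follows essentially the same route as the paper: both directions hinge on reading \ref{rbeqn6} at $r=n+1$ as the identity $\delta^3\mu_{n+1}=Ob_{n+1}(T)$, with the converse constructing $\tilde{\mu}_t=\mu_t+\mu_{n+1}t^{n+1}$ from an equivariant primitive. You spell out more explicitly than the paper why $\mu_{n+1}$ lands in $C^3_G(T,T)$ and why the primitive must be taken in the invariant subcomplex, but the argument is the same one.
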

\begin{proof}
  Suppose that an equivariant deformation $\mu_t$ of T of order n extends to an equivariant deformation of order $n+1$. This implies that \ref{rbeqn1},\ref{rbeqn2} and \ref{rbeqn3} are satisfied for $r=n+1.$  Observe that this implies $Ob_{n+1}(T)=\delta^3\mu_{n+1}$. So cohomology class of  $Ob_{n+1}(T)$ vanishes. Conversely, suppose that  cohomology class of  $Ob_{n+1}(T)$ vanishes, that is $Ob_{n+1}(T)$ is a coboundary. Let
  $$ Ob_{n+1}(T)=\delta^3\mu_{n+1},$$
  for some 3-cochain  $\mu_{n+1}\in C^3_G(T,T).$ Take
  $$\tilde{\mu_t}=\mu_t+\mu_{n+1}t^{n+1}.$$
  Observe that $\tilde{\mu_t}$ satisfies  \ref{rbeqn1},\ref{rbeqn2} and \ref{rbeqn3} for $0\le r\le n+1$. So  $\tilde{\mu_t}$ is an equivariant extension of $\mu_t$ and is of order $n+1$.

\end{proof}
\begin{cor}
  If $H^5_G(T)=0$, then every 3-cocycle in $C^3_G(T,T)$ is an infinitesimal of some equivariant deformation of $T.$
\end{cor}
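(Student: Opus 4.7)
The plan is to build the desired equivariant deformation one coefficient at a time by induction, using the two theorems just proved (that obstructions are $5$-cocycles, and that vanishing of the obstruction cohomology class is exactly what permits extension) as the engine, with the hypothesis $H^5_G(T)=0$ guaranteeing every obstruction is killed.

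First I would fix a $3$-cocycle $\mu_1 \in C^3_G(T,T)$ and observe, by Remark \ref{rbrem1}, that setting $\mu_t^{(1)} := \mu + \mu_1 t$ already gives an equivariant formal deformation of order $1$: the cocycle condition on $\mu_1$ is precisely the content of the $r=1$ case of equations \eqref{rbeqn1}--\eqref{rbeqn3}, and invariance of $\mu_1$ (it lies in $C^3_G(T,T)$) gives condition (b) of Definition \ref{rb3}. This is the base case of the induction and also arranges that the infinitesimal of the resulting deformation is exactly $\mu_1$.

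For the inductive step, assume that an equivariant deformation $\mu_t^{(n)} = \mu + \mu_1 t + \cdots + \mu_n t^n$ of order $n$ has been constructed with infinitesimal $\mu_1$. Form the $(n+1)$th obstruction cochain $Ob_{n+1}(T) \in C^5_G(T,T)$; by Lemma \ref{Obsl1} it is invariant, and by the preceding theorem it is a $5$-cocycle. The hypothesis $H^5_G(T)=0$ then forces $Ob_{n+1}(T)$ to be a coboundary, so there exists $\mu_{n+1} \in C^3_G(T,T)$ with $Ob_{n+1}(T) = \delta^3 \mu_{n+1}$. By the previous theorem, $\mu_t^{(n+1)} := \mu_t^{(n)} + \mu_{n+1} t^{n+1}$ is then an equivariant deformation of order $n+1$ extending $\mu_t^{(n)}$, and in particular its infinitesimal remains $\mu_1$.

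Iterating this step produces a compatible sequence $\{\mu_t^{(n)}\}_{n\ge 1}$, whose coefficients assemble into a formal power series $\mu_t = \sum_{i\ge 0} \mu_i t^i$ satisfying all of \eqref{rbeqn1}--\eqref{rbeqn3} for every $r\ge 0$ and all equivariance conditions, i.e.\ an equivariant formal one-parameter deformation of $T$ in the sense of Definition \ref{rb2} whose infinitesimal is the prescribed cocycle $\mu_1$. There is no genuine obstacle here since the two preceding theorems do all the heavy lifting; the only mild point to keep in mind is that the chosen lifts $\mu_{n+1}$ at each stage are not unique, but any consistent choice works and the infinitesimal is preserved throughout.
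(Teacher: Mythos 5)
Your induction is correct and is exactly the argument the paper leaves implicit: the corollary is stated without proof as an immediate consequence of the two preceding theorems, and your step-by-step construction (order-$1$ deformation from the cocycle via Remark \ref{rbrem1}, then killing each obstruction using $H^5_G(T)=0$ and the extension theorem) is the standard way to spell that out. No gaps.
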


\section{Equivalence of equivariant deformations and rigidity }\label{rbsec5}
Let   $\mu_t$  and $\tilde{\mu_t}$ be two equivariant deformations of T. An equivariant formal isomorphism from the equivariant deformations $\mu_t$ to $\tilde{\mu_t}$ of a Lts T is a $k[[t]]$-linear G-automorphism $\Psi_t:T[[t]]\to T[[t]]$ of the  form  $\Psi_t=\sum_{i\ge 0}\psi_it^i$, where each $\psi_i$ is an equivariant $k$-linear map $T\to T$, $\psi_0(a)=a$, for all $a\in T$ and $\tilde{\mu_t}(\Psi_t(a),\Psi_t(b),\Psi_t(c))=\Psi_t\mu_t(a,b,c),$ for all $a,b,c\in T.$
\begin{defn}
  Two equivariant deformations $\mu_t$  and $\tilde{\mu_t}$ are said to be equivalent if there exists an equivariant formal isomorphism  $\Psi_t$ from $\mu_t$ to  $\tilde{\mu_t}$.
\end{defn}
Equivariant formal isomorphism on the collection of all  equivariant deformations of a Lts T is an equivalence relation.
\begin{defn}
  Any equivariant deformation of T that is equivalent to the deformation $\mu_0$ is said to be a trivial deformation.
\end{defn}

\begin{thm}
  The cohomology class of the infinitesimal of an equivariant  deformation $\mu_t$ of  a Lts T is determined by the equivalence class of $\mu_t$.
\end{thm}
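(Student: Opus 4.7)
My plan is to fix two equivalent equivariant deformations $\mu_t = \mu + \sum_{i\ge 1}\mu_i t^i$ and $\tilde{\mu}_t = \mu + \sum_{i\ge 1}\tilde{\mu}_i t^i$, linked by an equivariant formal isomorphism $\Psi_t = \mathrm{id}_T + \sum_{i\ge 1}\psi_i t^i$, so that
\[\tilde{\mu}_t(\Psi_t(a),\Psi_t(b),\Psi_t(c)) = \Psi_t\,\mu_t(a,b,c)\]
for all $a,b,c\in T$. I then expand both sides as formal power series in $t$ and compare the coefficient of $t^1$.

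A direct bookkeeping calculation shows that the $t^1$-coefficient of the left-hand side is $\tilde{\mu}_1(a,b,c) + [\psi_1(a),b,c] + [a,\psi_1(b),c] + [a,b,\psi_1(c)]$, whereas the $t^1$-coefficient of the right-hand side is $\mu_1(a,b,c) + \psi_1([abc])$. Equating the two expressions yields
\[\tilde{\mu}_1(a,b,c) - \mu_1(a,b,c) = \psi_1([abc]) - [\psi_1(a),b,c] - [a,\psi_1(b),c] - [a,b,\psi_1(c)].\]

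The main technical step is to identify the right-hand side with a Yamaguti coboundary. Applying the formula for $\delta^1$ from Section~\ref{rbsec2} with $\theta(x,y)(v)=[vxy]$ and $D(x,y)(v)=[vyx]-[vxy]$ gives
\[\delta^1\psi_1(a,b,c) = [\psi_1(a),b,c] - [\psi_1(b),a,c] + [\psi_1(c),b,a] - [\psi_1(c),a,b] - \psi_1([abc]).\]
I would then use the antisymmetry $[xyz]=-[yxz]$ on the second term and combine the two $\psi_1(c)$-terms via the Jacobi identity $[xyz]+[yzx]+[zxy]=0$; after these manipulations $\delta^1\psi_1(a,b,c)$ collapses to the symmetric form $[\psi_1(a),b,c] + [a,\psi_1(b),c] + [a,b,\psi_1(c)] - \psi_1([abc])$. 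Comparing with the identity above one concludes $\tilde{\mu}_1 - \mu_1 = -\delta^1\psi_1$. This rewriting, in which the raw $\delta^1$ expression is put into the desired symmetric form via the Lts axioms, is the only place where real care is required.

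To finish, I observe that because $\Psi_t$ is a $G$-automorphism of $T[[t]]$, every coefficient $\psi_i\colon T\to T$ is $G$-equivariant; in particular $\psi_1\in C^1_G(T;T)$. By Lemma~\ref{rbnb1} this gives $\delta^1\psi_1\in C^3_G(T;T)$, so $\tilde{\mu}_1 - \mu_1$ is an equivariant $3$-coboundary. Therefore $[\tilde{\mu}_1] = [\mu_1]$ in $H^3_G(T;T)$, which is precisely the claim that the cohomology class of the infinitesimal of $\mu_t$ depends only on the equivalence class of $\mu_t$.
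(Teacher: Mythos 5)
Your proposal is correct and follows essentially the same route as the paper: equate $\tilde{\mu}_t(\Psi_t a,\Psi_t b,\Psi_t c)=\Psi_t\mu_t(a,b,c)$, compare coefficients of $t$, and identify $\mu_1-\tilde{\mu}_1$ with $\delta^1\psi_1$, which lies in $C^3_G(T;T)$ by equivariance of $\psi_1$. The only difference is that you spell out the reduction of the raw Yamaguti formula for $\delta^1$ to the symmetric form $[\psi_1(a)bc]+[a\psi_1(b)c]+[ab\psi_1(c)]-\psi_1([abc])$ via the Lts axioms, a step the paper takes for granted; your sign convention $\tilde{\mu}_1-\mu_1=-\delta^1\psi_1$ agrees with the paper's $\mu_1-\tilde{\mu}_1=\delta^1\psi_1$.
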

\begin{proof}
  Let  $\Psi_t$ from  $\mu_t$ to  $\tilde{\mu_t}$ be an equivariant  formal  isomorphism. So, we have  $\tilde{\mu_t}(\Psi_ta,\Psi_tb,\Psi_tc)=\Psi_t\circ \mu_t(a,b,c),$ for  all $a,b,c\in T$. This implies that $(\mu_1-\tilde{\mu_1})(a,b,c)=[\psi_1abc]+[a\psi_1bc]+[ab\psi_1c]-\psi_1[abc]$. So we have $\mu_1-\tilde{\mu_1}=\delta^1\psi_1$ This completes the proof.
\end{proof}
\begin{defn}
  An equivariant Lts T is said to be rigid if every deformation of $\mu_t$ of T  is trivial.
\end{defn}
\begin{thm}\label{rb-100}
  A non-trivial equivariant deformation of a Lts is equivalent to an equivariant  deformation whose n-infinitesimal is not a coboundary, for some $n\ge 1.$
\end{thm}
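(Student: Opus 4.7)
The plan is to run an inductive ``normalization'' argument: starting from a non-trivial equivariant deformation $\mu_t$, repeatedly apply equivariant formal isomorphisms to eliminate any infinitesimal that happens to be a coboundary, and show that this process must terminate with a non-coboundary infinitesimal, because otherwise the deformation would be trivial.

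Concretely, let $n$ be the smallest index with $\mu_n \neq 0$, so that $\mu_n$ is the $n$-infinitesimal of $\mu_t$; by the preceding proposition it is a $3$-cocycle in $C^3_G(T,T)$. If $\mu_n$ is not a coboundary, we are already done. Otherwise, write $\mu_n = \delta^1 \psi_n$ for some equivariant $\psi_n \in C^1_G(T,T)$ and define the $k[[t]]$-linear map
\[
\Psi_t \;=\; \mathrm{id}_T \,-\, \psi_n\, t^n .
\]
Since its leading term is the identity, $\Psi_t$ is invertible in $\mathrm{End}_{k[[t]]}(T[[t]])$, and it is an equivariant formal isomorphism because $\psi_n$ is $G$-equivariant. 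Now set
\[
\tilde{\mu}_t(a,b,c) \;=\; \Psi_t^{-1}\,\mu_t\bigl(\Psi_t a,\Psi_t b,\Psi_t c\bigr).
\]
Expanding in powers of $t$ and using the previous theorem's identification $\delta^1 \psi_n(a,b,c) = [\psi_n a,b,c] + [a,\psi_n b,c] + [a,b,\psi_n c] - \psi_n [abc]$, the coefficients of $t^i$ for $i<n$ remain zero, while the coefficient of $t^n$ becomes $\mu_n - \delta^1 \psi_n = 0$. Hence the first possibly non-zero infinitesimal of $\tilde{\mu}_t$ sits in degree strictly greater than $n$.

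Iterating this construction produces equivariant formal isomorphisms $\Psi^{(1)}_t, \Psi^{(2)}_t, \ldots$ with $\Psi^{(k)}_t \equiv \mathrm{id}_T \pmod{t^{n_k}}$ for a strictly increasing sequence $n_1 < n_2 < \cdots$. If the procedure halts at some stage, the resulting equivalent equivariant deformation has a first non-zero infinitesimal that is not a coboundary, which is the claim. If it never halts, then the composition $\Phi_t := \lim_{k\to\infty} \Psi^{(k)}_t \circ \cdots \circ \Psi^{(1)}_t$ converges in the $t$-adic topology on $\mathrm{End}_{k[[t]]}(T[[t]])$ precisely because the $k$-th correction lies in $t^{n_k}\,\mathrm{End}_k(T)[[t]]$ with $n_k \to \infty$; the limit $\Phi_t$ is again an equivariant formal automorphism and transforms $\mu_t$ into $\mu_0$, contradicting non-triviality.

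The main obstacle I anticipate is the careful bookkeeping of the coefficients of $\Psi_t^{-1}\mu_t(\Psi_t\cdot,\Psi_t\cdot,\Psi_t\cdot)$ at order $t^n$: one must verify that the only contributions at this order come from the linear term of $\Psi_t$ applied to one of the three arguments of $\mu_0$, from $\Psi_t^{-1} = \mathrm{id} + \psi_n t^n + O(t^{n+1})$ acting on $\mu_0$, and from $\mu_n$ itself, and that these sum to $\mu_n - \delta^1 \psi_n$. $G$-equivariance of $\tilde{\mu}_t$ is automatic since compositions and inverses of equivariant $k[[t]]$-linear maps are equivariant; the cocycle/Jacobi-type identities for $\tilde{\mu}_t$ follow formally from those of $\mu_t$ by the conjugation structure of the definition.
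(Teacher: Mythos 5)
Your proposal is correct and follows essentially the same normalization-by-conjugation argument as the paper: if the first nonzero $\mu_n$ is a coboundary $\delta^1\psi$, conjugate by $\Psi_t = \mathrm{id}_T \pm \psi t^n$ to push the first nonzero infinitesimal past degree $n$, and iterate. The only substantive difference is that you justify termination via the $t$-adic convergence of the infinite composition of the $\Psi^{(k)}_t$, a detail the paper leaves implicit when it asserts that ``the process must stop if the deformation is nontrivial.''
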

\begin{proof}
  Let $\mu_t$ be an equivariant deformation of  a Lts T with n-infinitesimal $\mu_n$, for some $n\ge 1.$ Assume that there exists a 1-cochain $\psi\in C_G^1(T,T)$ with $\delta^1\psi=\mu_n.$  Take $\Psi_t=Id_T+\psi t^n$. Define $\tilde{\mu_t}=\Psi_t\circ \mu_t\Psi_t^{-1}$. Clearly, $\tilde{\mu_t}$ is an equivariant deformation of T and $\Psi_t$ is an   equivariant  formal isomorphism from $\mu_t$  to  $\tilde{\mu_t}$. For $u,v,w\in T,$ we have  $\tilde{\mu_t}(\Psi_tu,\Psi_tv, \Psi_tw)=\Psi_t(\mu_t(u,v,w)),$ which implies $\tilde{\mu_i}=0,$ for $1\le i\le n.$ So $\tilde{\mu_t}$ is equivalent to the given deformation and $\tilde{\mu_i}=0,$ for $1\le i\le n.$ We can repeat the argument to get rid off any infinitesimal that is a coboundary. So the process must stop if the deformation is nontrivial.
\end{proof}
An immediate consequence of the Theorem \ref{rb-100} is following corollary.
\begin{cor}
  If $H_G^3(T,T)=0,$ then T is rigid.
\end{cor}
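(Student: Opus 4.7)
The plan is to argue by contraposition, leaning entirely on Theorem \ref{rb-100} together with the earlier fact that every $n$-infinitesimal is a $3$-cocycle in $C^3_G(T,T)$.

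Concretely, I would suppose that $T$ is not rigid and derive a contradiction. Non-rigidity supplies a non-trivial equivariant deformation $\mu_t$ of $T$. Applying Theorem \ref{rb-100}, I may replace $\mu_t$ by an equivalent equivariant deformation $\tilde{\mu_t}$ whose $n$-infinitesimal $\tilde{\mu_n}$ fails to be a coboundary for some $n\ge 1$. By the proposition preceding Theorem \ref{rb-100}, this $\tilde{\mu_n}$ is nevertheless a $3$-cocycle in $C^3_G(T,T)$.

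The contradiction is then immediate: the hypothesis $H^3_G(T,T)=0$ forces every $3$-cocycle in $C^3_G(T,T)$ to be a coboundary, in particular $\tilde{\mu_n}$ itself, against our choice. Hence no non-trivial equivariant deformation can exist and $T$ is rigid.

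I expect no real obstacle here; the statement is a formal consequence of Theorem \ref{rb-100} and the cocycle property of infinitesimals, so the write-up amounts to citing these two facts in the correct order. The only subtlety worth flagging is that Theorem \ref{rb-100} is an iterative statement (``the process must stop''), so one should be careful that the equivalent deformation produced there is guaranteed to be non-trivial, which is what permits the assumption that $\tilde{\mu_n}$ is a non-coboundary cocycle for at least one $n$.
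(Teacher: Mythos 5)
Your argument is correct and is precisely the intended one: the paper presents this corollary as an immediate consequence of Theorem \ref{rb-100}, and your contrapositive — a non-trivial deformation would yield an $n$-infinitesimal that is a $3$-cocycle but not a coboundary, contradicting $H^3_G(T,T)=0$ — is exactly that deduction. Your closing caveat about the iterative ``process must stop'' step is a fair observation, but it concerns the proof of Theorem \ref{rb-100} itself rather than introducing anything new into the corollary.
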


\section*{References:}

%

\end{document}